\theoremstyle{plain}
\newtheorem{theorem}{Theorem}[section]
\newtheorem{lemma}[theorem]{Lemma}
\newtheorem{definition}[theorem]{Definition}
\theoremstyle{remark}
\newtheorem*{remarks}{Remarks}
\numberwithin{equation}{section}
\newcommand{\eqnlabel}[1]{\label{eqn:#1}}   % equation
\newcommand{\eqnref}[1]{\eqref{eqn:#1}} % parenthesized eqn ref
\newcommand{\bK}{\mathbb{K}}
\newcommand{\bP}{\mathbb{P}}
\newcommand{\bR}{\mathbb{R}}
\newcommand{\bC}{\mathbb{C}}
\newcommand{\bH}{\mathbb{H}}
\newcommand{\bO}{\mathbb{O}}
\newcommand{\cX}{\mathcal{X}}
\newcommand{\Dil}{\mathrm{Dil}}
\newcommand{\pr}{\mathrm{pr}}
\newcommand{\Bigsetof}[2]{\begin{Bmatrix} #1 \,\Big|\, #2 \end{Bmatrix}}
\newcommand{\myand}{\enspace\text{and}\enspace}
\newcommand{\inv}{^{-1}}
\newcommand{\msk}{\medskip}
\newcommand{\ssk}{\smallskip}
\newcommand{\nin}{\noindent}
\begin{document}

\title{Torsors and ternary Moufang loops arising in projective geometry}

\author{Wolfgang Bertram}

\address{Institut \'{E}lie Cartan Nancy \\
Universit\'{e} de Lorraine, CNRS, INRIA \\
Boulevard des Aiguillettes, B.P. 239 \\
F-54506 Vand\oe{}uvre-l\`{e}s-Nancy, France}

\email{\url{wolfgang.bertram@univ-lorraine.fr}}

\author{Michael Kinyon}

\address{Department of Mathematics \\
University of Denver \\
2360 S Gaylord St \\
Denver, Colorado 80208 USA}

\email{\url{mkinyon@du.edu}}

\subjclass[2010]{
06C05, %   	Modular lattices, Desarguesian lattices
20N05, % 	Loops, quasigroups
20N10, %= Ternary systems (heaps, semiheaps, heapoids, etc.)
51A35}  % 	Non-Desarguesian affine and projective planes
%17C37, % Jordan - Associated geometries
%16\Omega10% Rings with involution; Lie, Jordan and other nonassociative structures

\keywords{
%associative algebras and pairs,
projective plane, Moufang plane,
(ternary) Moufang loop,
torsor (heap, groud, principal homogeneous space),
%semitorsor, linear relations, homotopy and isotopy,
%Grassmannian, generalized projective geometry
lattice}

\begin{abstract}
%Maphy/MoufangPlanes.tex, Date 10 / 06 / 2012
We  give an interpretation of the construction of torsors from
\cite{BeKi10} in terms of classical projective geometry.
For the Desarguesian case, this leads to a reformulation of certain results from
\cite{BeKi10}, whereas for the Moufang case the result is new.
But even in the Desarguesian case it sheds new light on the relation between
the lattice structure and the algebraic structures of a projective space.
\end{abstract}

\maketitle

\section{The geometric construction}

\subsection{The generic case}
In this first chapter, we describe the general construction of torsors and of
ternary loops associated to projective spaces; proofs and computational descriptions
are given in the two following chapters.
We assume  that $\cX$ is a projective space
of dimension at least two. For projective subspaces $a,b$ of $\cX$,
let as usual $a \land b$ be the meet (intersection) and $a \lor b$
be the join (smallest subspace containing $a$ and $b$).

\begin{definition}\label{GenericDefinition}
Consider a pair  $(a,b)$  of hyperplanes in $\cX$ and a triple of points
$(x,y,z)$, none of them in $a$ or $b$. Assume that $x,y,z$ are not collinear.
% More generally, we might assume : ...
% and such that the line $x \lor y$ is not in $a$ and the line
% $z \lor y$ not in $b$ (hence $(x\lor a)\land a$ and $(z\lor y)\land b$ are points).
Then we define a fourth point $w:= xz :=  (xyz)_{ab}$ as follows:
$w$ is the intersection of

\ssk
-- the parallel of the line $x \lor y$ through $z$ in the affine space
$V_a:=\cX \setminus a$, with

-- the parallel of the line $z \lor y$ through $x$ in the affine space
$V_b= \cX \setminus b$; that is:
\[
w = xz= (xyz)_{ab} =  \Bigl( \bigl( (x \lor y) \land a \bigr) \lor z \Bigr)
\land
\Bigl( \bigl( ( z \lor y) \land b \bigr) \lor x \Bigr) \ .
\]
\end{definition}

\nin
Note that this point of intersection exists since all lines belong to
the projective plane spanned by $x,y,z$. For $a=b$, this is the usual
``parallelogram definition'' of vector addition in the affine space
$V_a$ with origin $y$, that is, $xz = x + z$ in this case. Hence, for
$a \not= b$, $(xyz)_{ab}$  may be seen as a kind of ``deformation of
vector addition'': we have  a sort of ``fake parallelogram'' with
vertices $y,x,z,w$, as shown in the illustrations below.
As for ``usual'' parallelograms, it is easily seen  that, with
$(xyz):=(xyz)_{ab}$ for fixed $(a,b)$, the conditions
\begin{equation}
w = (xyz)\,, \qquad
y=(zwx)\,, \qquad
z=(yxw)\,,\qquad
x=(wzy)
\end{equation}
are all equivalent.
Note also  that it is obvious from the definition that
\begin{equation}
(xyz)_{ba}=(zyx)_{ab} \, .
\end{equation}
If we represent $a$ and $b$ by affine lines, intersecting in the
affine drawing plane, the construction is visualized like this:

\begin{figure}[htbp]
\begin{center}
\setlength{\unitlength}{0.3mm}
\begin{picture}(400,100)
    \allinethickness{0.254mm}\path(0,-100)(400,0)
    \put(312,-31){\shortstack{$a$}}
    \allinethickness{0.254mm}\path(0,-0)(400,-100)
    \put(312,-75){\shortstack{$b$}}
    \allinethickness{0.254mm}\path(100,-75)(180,-115)
    \allinethickness{0.254mm}\path(250,-62.5)(180,-115)
    \allinethickness{0.254mm}\path(250,-62.5)(133.33,-91.67)
    \allinethickness{0.254mm}\path(100,-75)(214.286,-89.2857)
    \put(133.33,-91.67){\ellipse*{3}{3}}
    \put(121,-97){\shortstack{$x$}}
    \put(166.667,-83.333){\ellipse*{3}{3}}
    \put(165,-77){\shortstack{$w$}}
    \put(214.286,-89.2857){\ellipse*{3}{3}}
    \put(221,-93){\shortstack{$z$}}
    \put(180,-115){\ellipse*{3}{3}}
    \put(178,-124){\shortstack{$y$}}
\end{picture}
\end{center}
\vspace{4cm}
\end{figure}

\nin If we choose $a$ as ``line at infinity'' of our drawing plane,
and if we choose to draw $b$ horizontally, then we get the following
image:

\begin{figure}[htbp]
\vspace{-3cm}
\begin{center}
\setlength{\unitlength}{0.3mm}
\begin{picture}(400,100)
    \allinethickness{0.254mm}\path(0,-10)(400,-10)
    \put(320,-5){\shortstack{$b$}}
    \allinethickness{0.254mm}\path(275,-10)(180,-200)
    \allinethickness{0.254mm}\path(275,-10)(65,-200)
    \allinethickness{0.254mm}\path(45,-50)(350,-100)
    \allinethickness{0.254mm}\path(35,-100)(340,-150)
    \put(153.970,-119.503){\ellipse*{3}{3}}
    \put(151,-114){\shortstack{$x$}}
    \put(202.290,-75.7852){\ellipse*{3}{3}}
    \put(197,-70){\shortstack{$w$}}
    \put(239.091,-81.8182){\ellipse*{3}{3}}
    \put(245,-79){\shortstack{$z$}}
    \put(215.227,-129.545){\ellipse*{3}{3}}
    \put(216,-138){\shortstack{$y$}}
\end{picture}
\end{center}
\vspace{6cm}
\end{figure}

\nin These images admit a spacial interpretation: we may imagine the
observer placed in affine space $\bR^3$ inside a plane $B$ which is vizualized only
by its ``horizon'', the line $b$; then we think of the line $y \lor z$ as lying
in a plane $B'$ parallel to $B$, and of the line $x \lor w$
as lying in another such plane $B''$; the other two lines $w\lor z$
and $x \lor y$ lie in planes that are parallel to the drawing plane $P$.
This interpretation is not symmetric in $x$ and $z$: 
the point $z$ lies  ``behind'' (or ``in front of'') $y$, whereas $x$ is considered to
be ``on the same level'' as $y$. 

\ssk
The product $xz$ is thus in general not commutative, but it is
\emph{associative}: we  show that,
if $\cX$ is Desarguesian, then, for any fixed origin $y$, the binary map
$(x,z) \mapsto xz$ gives rise to a \emph{group law}
on the intersection of affine parts
\begin{equation}\label{U_ab-def}
U_{ab} := \cX \setminus (a \cup b) = V_a \cap V_b \ .
\end{equation}
More generally and more conceptually, we show that the ternary law
$(x,y,z) \mapsto (xyz)_{ab}$ defines a \emph{torsor
structure on} $U_{ab}$ (Theorem \ref{TorsorDesarguesTheorem}).
% In fact,   $(xyz)_{ab}$ is precisely the law defined in \cite{BeKi10}, hence is in particular para-associative.
Naturally, the question arises what we can say for general, non-Desarguesian
projective planes, or for still  more general lattices.
The most prominent class of non-Desarguesian projective planes are the
\emph{Moufang planes}: we show that in this case we get a kind of
``alternative version of a torsor'' which we call a \emph{ternary Moufang loop}
(Theorem \ref{MoufangTheorem}).
For $a=b$, these ternary Moufang loops contract to the abelian vector group
of an affine plane. For very general projective planes (which need not be
``translation planes'') it remains an interesting open problem to relate
this new algebraic structure to those traditionally considered in the
literature: indeed, our definition is closely related to the more traditional
ways of coordinatizing projective planes by \emph{ternary rings}.
This is related to the following item.

\subsection{The collinear case}
We have not yet defined what $(xyz)_{ab}$ should mean if $x,y,z$ are
\emph{collinear}. If $\cX$ is a \emph{topological} projective plane, then
one would like to complete our definition simply ``by continuity'', \emph{e.g.},
by taking the limit of $(xyz)_{ab}$ as $y$, not lying on the line $x \lor z$,
 converges to a point on $x \lor z$.
This is indeed what happens in the classical planes over the division algebras
$\bR,\bC,\bH,\bO$.
Since we do not know whether in very general cases such a ``limit'' exists,
we restrict ourselves here to the Moufang case, and leave the general case for
later work.

\begin{definition}\label{CollinearDefinition}
Assume that $\cX$ is a Moufang plane or a projective space of dimension
bigger than $2$. Consider  a pair $(a,b)$ of hyperplanes and a
\emph{collinear}  triple $(x,y,z)$ of points, none of them in $a$ or $b$.
\begin{enumerate}
\item
If $x=y=z$, let $(xyz)_{ab}:=x$.
%\item [This is case is a special case of the following !!]
%If $a=b$, then let $(xyz)_{ab}$ be the usual  sum $x-y+z$ in
%the affine space $V_a$.
\item
If $x \not=y$, then let us
choose a point $u$ not belonging to $x \lor y$ or to  $a$, and we let
$w:= (xyz)_{ab} : = $
\[
\qquad
(x \lor y)  \land  \Bigl[  \Bigl( ( z \lor u) \land b \Bigr) \lor
\Bigl(  \Bigl[ \bigl( (x \lor y) \land a \bigr) \lor u \Bigr]
\land
\Bigl[ \bigl( ( u \lor y) \land b \bigr) \lor x \Bigr] \Bigr)  \Bigr] \ .
\]
(It will be shown below that $w$ does not depend on the choice of $u$.)
\item
If  $z \not= y$, then we let
\[
w:= (xyz)_{ab} : = (zyx)_{ba} \,,
\]
where the right hand side is defined by the preceding case.
\end{enumerate}
\end{definition}

\nin This definition can be interpreted from two different viewpoints:

\ssk
(A) \emph{Algebraic.}  In the Desarguesian case, the expression in (2) is
derived from our first definition by using para-associativity and idempotency:
\[
((xyu)uz) = (xy(uuz)) = (xyz)\,,
\]
where now the left hand side can be expressed by using twice Definition
\ref{GenericDefinition} (see Theorem \ref{LinkGammaLattice} below).
This is indeed in keeping with idea explained above of ``taking a limit''
(imagine $u$ tending towards a point on the line $L$).
The argument still goes through in the Moufang case since one does not need for it
full para-associativity, but just a special case which remains valid precisely in the
Moufang case (but it breaks down as soon as one wants to go further).

\msk
(B) \emph{Geometric.}
The  formula in (2)  corresponds to classical ``constructions of the field
associated to a plane''. It is known that in the Moufang case the field does
not depend on the ``off-line'' point $u$. More specifically, we distinguish
two cases in (2):
\begin{itemize}
\item
generic case:
if the points $L \land a$ and $L\land b$ are different, then 
$(xyz)_{ab}$  is the product $zy\inv x$ on the vector line $L$
with ``point at infinity'' $L \land a$ and  ``zero point'' $L \land b$
(in the following illustration, $a$ is the line at infinity and $b$ the horizontal line;
in usual textbook drawings, the inverse choice is made. We have marked the points
$p=(xyu)$ and $q=(puz)=w$.)
\end{itemize}

\begin{center}
\newrgbcolor{xdxdff}{0.49 0.49 1}
\psset{xunit=1.0cm,yunit=1.0cm,algebraic=true,dotstyle=o,dotsize=3pt 0,linewidth=0.8pt,arrowsize=3pt 2,arrowinset=0.25}
\begin{pspicture*}(2,-3)(12,5)
\psplot{2}{12}{(--71.89-0*x)/17.62}
\psplot{2}{12}{(-10.58--2.24*x)/1.98}
\psplot{2}{12}{(-6.07--2.24*x)/1.98}
\psplot{2}{12}{(--9.11-2.8*x)/-0.46}
\psplot{2}{12}{(--19.21-3.88*x)/0.97}
\psplot{2}{12}{(-2.16--1.36*x)/3.22}
\psplot{2}{12}{(-4-2.1*x)/-6.77}
\begin{scriptsize}
\psdots[dotstyle=*,linecolor=blue](-1.34,4.08)
\psdots[dotstyle=*,linecolor=blue](16.28,4.08)
\psdots[dotstyle=*,linecolor=blue](2.92,-2.04)
\rput[bl](3,-1.92){\blue{y}}
\psdots[dotstyle=*,linecolor=blue](4.9,0.2)
\rput[bl](4.98,0.32){\blue{x}}
\psdots[dotstyle=*,linecolor=xdxdff](6.6,2.12)
\rput[bl](6.68,2.24){\xdxdff{z}}
\psdots[dotstyle=*,linecolor=blue](3.38,0.76)
\rput[bl](3.46,0.88){\blue{u}}
\psdots[dotstyle=*,linecolor=darkgray](3.93,4.08)
\psdots[dotstyle=*,linecolor=darkgray](4.45,1.98)
\rput[bl](4.54,2.1){\darkgray{p}}
\psdots[dotstyle=*,linecolor=darkgray](11.23,4.08)
\psdots[dotstyle=*,linecolor=darkgray](7.23,2.84)
\rput[bl](7.32,2.96){\darkgray{q}}
\end{scriptsize}
\end{pspicture*}
\end{center}

\begin{itemize}
\item
special case:
if the line $L:=x \lor y$ intersects $a \land b$, then
$(xyz)_{ab}$  is the ``ternary sum''  $x-y+z$ in the affine line
$L$ (with  $L \land (a \land b)$ as
``point at infinity'', see the following illustration, which is  the limit case of the
preceding one, as $L$ becomes parallell to $b$).
\end{itemize}

\begin{center}
\newrgbcolor{xdxdff}{0.49 0.49 1}
\psset{xunit=1.0cm,yunit=1.0cm,algebraic=true,dotstyle=o,dotsize=3pt 0,linewidth=0.8pt,arrowsize=3pt 2,arrowinset=0.25}
\begin{pspicture*}(1,-2)(13,5)
\psplot{1}{13}{(--57.65-0*x)/15.58}
\psplot{1}{13}{(--25.24-0*x)/15.58}
\psplot{1}{13}{(-9.97-0*x)/15.58}
\psplot{1}{13}{(-7.06--2.26*x)/3.4}
\psplot{1}{13}{(-17.43--2.26*x)/-3}
\psplot{1}{13}{(--26.06-4.34*x)/-3.15}
\psplot{1}{13}{(--22.03-2.08*x)/4.38}
\begin{scriptsize}
\psdots[dotstyle=*,linecolor=blue](5.56,1.62)
\rput[bl](5.64,1.74){\blue{u}}
\psdots[dotstyle=*,linecolor=blue](2.16,-0.64)
\rput[bl](2.24,-0.52){\blue{y}}
\psdots[dotstyle=*,linecolor=xdxdff](5.54,-0.64)
\rput[bl](5.62,-0.52){\xdxdff{x}}
\psdots[dotstyle=*,linecolor=xdxdff](8.56,-0.64)
\rput[bl](8.64,-0.52){\xdxdff{z}}
\psdots[dotstyle=*,linecolor=darkgray](2.8,3.7)
\psdots[dotstyle=*,linecolor=darkgray](8.69,3.7)
\psdots[dotstyle=*,linecolor=darkgray](7.18,1.62)
\rput[bl](7.26,1.74){\darkgray{p}}
\psdots[dotstyle=*,linecolor=darkgray](11.94,-0.64)
\rput[bl](12.02,-0.52){\darkgray{q}}
\end{scriptsize}
\end{pspicture*}
\end{center}

\nin The main result of the present work can now be stated as follows:

\begin{definition}\label{TorsorDefinition}
A  set $G$  with a map
$G^3 \to G$, $(x,y,z) \mapsto (xyz)$ is called a \emph{torsor} if
\begin{align*}
 (xxy)&=y=(yxx) \tag{T0} \\
 (xy(zuv))& = (x(uzy)v)=((xyz)uv) \tag{T1}
 \end{align*}
 and a \emph{ternary Moufang loop} if it  satisfies  \emph{(T0)} and
\begin{align*}
(uv(xyx)) &= ((uvx)yx) \tag{MT1} \\ % AP2
(xy(xyz)) &= ((xyx)yz)  \tag{MT2}   % AP3
\end{align*}
\end{definition}

\begin{theorem}
If $\cX$ is a projective space of dimension bigger than one
over a skew-field (i.e., a Desarguesian space),
then the preceding constructions define a torsor law on $U_{ab}$.
If $\cX$ is a Moufang projective plane, then the constructions define a
ternary Moufang loop.
\end{theorem}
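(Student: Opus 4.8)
The plan is to verify the axioms not directly on the lattice expression of Definition~\ref{GenericDefinition}, but after passing to coordinates, where each axiom of Definition~\ref{TorsorDefinition} becomes an algebraic identity in the coordinatizing ring $R$ of $\cX$. The guiding principle is that the torsor laws (T0), (T1) are exactly what one obtains from the ternary operation $(xyz)=xy\inv z$ on a \emph{group}, so they hold as soon as $R$ is associative (the skew-field case), whereas the weaker identities (MT1), (MT2) are the residue that survives when associativity is replaced by the Moufang/alternative laws (the alternative-division-ring case). Thus I would run one coordinate computation and read off, for each axiom, precisely which law of $R$ it consumes.

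\emph{Desarguesian case.} Write $\cX=\bP^n(K)$ over the skew-field $K$ and use the transitivity of the collineation group on pairs of distinct hyperplanes to normalize $a=\{x_0=0\}$ and $b=\{x_n=0\}$. In the affine chart $V_a=\cX\setminus a$ the set $U_{ab}=\cX\setminus(a\cup b)$ becomes $\{(\xi_1,\dots,\xi_n):\xi_n\neq0\}$, and I would compute the two ``parallels'' of Definition~\ref{GenericDefinition} as ordinary affine lines and solve for their intersection $w$. The outcome should be the group-torsor operation $w=xy\inv z$ for the homothety--translation group law $(\vec\xi,s)(\vec\eta,t)=(\vec\xi+s\vec\eta,\,st)$ on $U_{ab}$ (with the degenerate case $a=b$ recovering the additive heap $x-y+z$ as a consistency check). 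Since $(xyz)=xy\inv z$ satisfies (T0) and (T1) in any group, this disposes of the non-collinear triples; the collinear triples are then matched to the same expression through Definition~\ref{CollinearDefinition}, the independence of the auxiliary point $u$ being the geometric face of the algebraic reduction $((xyu)uz)=(xy(uuz))=(xyz)$ noted in remark (A).

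\emph{Moufang case.} Now $\cX$ is a projective plane and $R=A$ is an alternative division ring, so $A^*$ is only a Moufang loop; by Artin's theorem any two elements of $A$ generate an associative subring. I would rerun the same coordinate computation and verify (MT1), (MT2) by reducing each side to a word lying in a \emph{two-generated} subloop, where diassociativity applies. Concretely, after collapsing $\langle x,y\rangle$ to a group, (MT1) becomes the right Moufang identity $((zx)s)x=z(x(sx))$ and (MT2) becomes the left alternative law $w(wz)=w^2z$ with $w=xy\inv$; both hold in any Moufang loop. The crucial structural point is that (MT1) and (MT2) contain \emph{repeated} variables (the block $xyx$ and the repeated pair $xy$), which is exactly what lets diassociativity rescue them, whereas (T1) involves five independent variables $x,y,z,u,v$ and genuinely fails. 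For collinear triples one again uses Definition~\ref{CollinearDefinition}, where the independence of $w$ from the off-line point $u$ is the classical fact that a Moufang plane is coordinatized by a single alternative division ring (equivalently, the transitivity of the relevant little-Desargues collineations).

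\emph{Main obstacle.} The skew-field computation is routine apart from keeping left and right scalars straight. The real difficulty is the Moufang case: since associativity is unavailable one cannot transport a group law wholesale, and one must (i) check that the coordinate form of $(xyz)_{ab}$ satisfies (MT1), (MT2) using \emph{only} alternativity and the Moufang identities, being careful about bracketing, and (ii) establish the $u$-independence in Definition~\ref{CollinearDefinition}, which is the precise point where the Moufang hypothesis is indispensable and where a naive imitation of the Desarguesian argument breaks down.
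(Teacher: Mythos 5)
Your proposal is correct and takes essentially the same route as the paper's computational proof (b): coordinatize, derive the explicit affine formula for $(xyz)_{ab}$ (the paper's Lemma~\ref{ComputationLemma}, namely $(xyz)_{ab}=\beta(z)\beta(y)\inv(x-y)+z$), and check each axiom as an identity in the coordinate ring, with the Moufang case reducing (MT1) and (MT2) to the right Moufang identity and the left alternative law exactly as in the paper's Lemma~\ref{MoufangLemma}, and with the collinear case absorbed into the same formula via the geometric definition of the field operations. The only cosmetic difference is that you dispose of (T0)--(T1) by recognizing the operation as the canonical torsor $xy\inv z$ of the homothety--translation ($ax+b$) group (which the paper only records later, in its discussion of the structure of $U_{ab}$), whereas the paper verifies para-associativity by direct computation from the displayed formula.
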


\subsection{Generalized cross-ratios, and associative geometries}
In the Desarguesian case, a very general theory describing torsors of the kind of $U_{ab}$
 has been developed in \cite{BeKi10}.  Comparing with the approach
presented here, one may ask for what kinds of lattices there are similar
theories -- we will, in subsequent work, investigate in more depth the case
of Moufang spaces, related to \emph{alternative} algebras, triple systems and
pairs. Returning to the Desarguesian case and to classical projective geometry,
the link between the lattice and the structure defined in \cite{BeKi10} is
surprisingly close; however, one should not forget that for \emph{projective
lines} the lattice structure is completely useless, whereas the structures from
\cite{BeKi10} are at least as strong as the classical \emph{cross-ratio}, and
hence are much stronger than the lattice structure. Let us briefly explain this.
Given  a unital ring $\bK$ and $\cX:=\cX(\Omega)$,  the full Grassmannian
geometry of some $\bK$-module $\Omega$ (set of all submodules of $\Omega$),
we have associated  in \cite{BeKi10} to any $5$-tuple $(x,a,y,b,z) \in \cX^5$
another element of $\cX$ by
\begin{equation}\label{GammaDef}
\Gamma(x,a,y,b,z)  :=
\Bigsetof{\omega \in \Omega}
{\begin{array}{c}
\exists \xi \in x,
\exists \alpha \in a,
\exists \eta \in y,
\exists \beta \in b,
\exists \zeta \in z : \\
\omega = \zeta + \alpha
= \alpha + \eta + \beta
= \xi + \beta
\end{array}}\, .
\end{equation}
In \cite{BeKi10}, Theorem 2.4, it is shown that the lattice structure is
recovered via
\begin{equation}
x \land a = \Gamma(x,a,y,x,a), \qquad
b \lor a = \Gamma(a,a,y,b,b)
\end{equation}
for any $y \in \cX$.
On the other hand, in the present work we prove
(Theorem \ref{LinkGammaLattice}) that, if $\bK$ is a field, if $a,b$ are
hyperplanes and $x,y,z$ one-dimensional subspaces, then $\Gamma(x,a,y,b,z)$
can be recovered from the lattice structure via
\begin{equation}
\Gamma(x,a,y,b,z)= (xyz)_{ab}\,.
\end{equation}
Thus, roughly speaking, for Desarguesian projective spaces of dimension
bigger than one, $\Gamma$ and the lattice structure are essentially equivalent
data.
Summing up, there are two major approaches to our object:
the algebraic approach (\cite{BeKi10}), based on associative algebras and -pairs and on an
underlying group structure of the ``background'' $\Omega$ (cf.\ \cite{Be12}),
and the lattice theoretic approach from the present work, keeping close to classical geometric language,
and paving the way to incorporate exceptional geometries into the picture.

\section{The Desarguesian case}

\begin{theorem}\label{TorsorDesarguesTheorem}
Assume $\cX$ is a Desarguesian projective space of dimension bigger than one, and fix a pair
$(a,b)$ of hyperplanes. Then $U_{ab}$, together with the ternary product
$(xyz):=(xyz)_{ab}$ defined above, is a torsor.
 In particular, if we  fix an ``origin'' $y \in U_{ab}$, then
$U_{ab}$ with product $xz=(xyz)_{ab} $ and origin $y$ becomes
a  group. If $a \not= b$, then this is group is not commutative.
\end{theorem}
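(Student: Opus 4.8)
The plan is to verify the torsor axioms (T0) and (T1) for the ternary product $(xyz)_{ab}$ on $U_{ab}$. Axiom (T0), the identities $(xxy)=y=(yxx)$, should be the easy part: these are the degenerate ``parallelogram'' relations. When $x=z$ in the construction, the ``fake parallelogram'' collapses, and the two parallels defining $w$ meet precisely at $y$ or at the remaining point, so (T0) follows essentially from the equivalence of the four relations displayed just after Definition \ref{GenericDefinition}. One must be slightly careful about the collinear case, where $(xyz)_{ab}$ is given by Definition \ref{CollinearDefinition}, but there too idempotency is built into the construction (clause (1) handles $x=y=z$, and the field-theoretic interpretation $zy\inv x$ or $x-y+z$ makes (T0) transparent).

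The substance of the theorem is the para-associative law (T1), namely $(xy(zuv))=(x(uzy)v)=((xyz)uv)$. My plan is to exploit the promised identification $\Gamma(x,a,y,b,z)=(xyz)_{ab}$ from Theorem \ref{LinkGammaLattice}: once the lattice-theoretic ternary product is known to coincide with the linear-algebraic $\Gamma$ from \eqnref{GammaDef}, para-associativity can be checked by a direct computation with representatives $\xi\in x,\ \eta\in y,\ \zeta\in z$, etc. Concretely, for Desarguesian $\cX$ we may coordinatize: choose $b$ as the hyperplane at infinity and identify $V_b$ with a vector space, so that for a fixed origin the binary product $xz=(xyz)_{ab}$ becomes an explicit affine-linear expression. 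One then shows that in suitable coordinates $(xyz)_{ab}$ equals $x y\inv z$ computed in the group obtained from the two affine structures $V_a$ and $V_b$ — intuitively, passing from the $V_a$-parallel to the $V_b$-parallel applies a fixed linear ``shear'' relating the two affine charts. Para-associativity of the ternary product $xy\inv z$ holds in any group, so (T1) reduces to this known group identity.

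The main obstacle is establishing the coordinate description rigorously, i.e.\ proving that $(xyz)_{ab}$ really is a group-derived para-associative operation rather than merely an affine deformation that happens to be associative. This is exactly where the Desarguesian (equivalently, coordinatization by a skew-field) hypothesis is essential: Desargues's theorem guarantees that the ``shear'' relating the charts $V_a$ and $V_b$ is a well-defined collineation induced by a linear map, so that the construction is globally consistent and not just locally a parallelogram rule. In the non-Desarguesian Moufang case this linearity fails, which is precisely why one only obtains the weaker identities (MT1), (MT2) and a ternary Moufang loop rather than a full torsor.

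Finally, the group and non-commutativity statements follow formally. Fixing an origin $y\in U_{ab}$ and setting $xz:=(xyz)_{ab}$, axiom (T0) gives $y$ as a two-sided identity ($yz=(yyz)=z$ and $xy=(xyy)=x$), inverses come from the relation $y=(x\,(xyz)\,z)$-type identities implied by (T1), and associativity of the binary product is the specialization of para-associativity with the middle origin fixed; thus $(U_{ab},\cdot,y)$ is a group. For non-commutativity when $a\neq b$, my plan is to exhibit a single explicit pair $x,z$ with $(xyz)_{ab}\neq(zyx)_{ab}$, using the identity $(zyx)_{ab}=(xyz)_{ba}$ from the second displayed equation: since the construction is manifestly asymmetric in the roles of $a$ and $b$ (the two parallels are drawn in the two \emph{different} affine charts $V_a$ and $V_b$), the ``spacial interpretation'' discussed in the text shows the product behaves like a genuinely noncommutative group multiplication, so producing a concrete counterexample in coordinates finishes the argument.
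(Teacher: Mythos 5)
Your outline correctly identifies two viable routes --- the coordinate computation, and the identification $(xyz)_{ab}=\Gamma(x,a,y,b,z)$ via Theorem \ref{LinkGammaLattice} followed by para-associativity of $\Gamma$ --- and these are indeed two of the three proofs the paper gives. But the decisive step (T1) is not actually carried out, and the shortcut you propose for it is circular. You assert that $(xyz)_{ab}$ ``equals $xy\inv z$ computed in the group obtained from the two affine structures $V_a$ and $V_b$,'' the two charts being related by ``a fixed linear shear,'' and then deduce (T1) from para-associativity of $xy\inv z$ in a group. The existence of such a group on $U_{ab}$ is, however, precisely the content of the theorem: a ternary operation is of the form $xy\inv z$ for some group law if and only if it satisfies (T0) and (T1), so the group cannot be invoked before para-associativity is proved. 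Nor is there a single collineation (``shear'') relating the charts that produces the operation: the construction takes one parallel in \emph{each} chart, and the resulting binary product with origin $y$ is of ``$ax+b$'' type, $xz=(1-\beta(z))x+z$, not a conjugate of vector addition; Desargues enters through coordinatization by a skew-field (or, in the paper's purely synthetic proof, through two explicit Desargues configurations), not through linearity of a chart-change.

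What actually closes the gap in the paper is the explicit formula of Lemma \ref{ComputationLemma}: choosing an origin in $V_a\cap b$ and a linear form $\beta$ with $b\cap V_a=\ker\beta$, one computes $(xyz)_{ab}=\beta(z)\beta(y)\inv(x-y)+z$ by intersecting the two lines of Definition \ref{GenericDefinition} in coordinates, and then verifies (T1) by checking that $((xyz)uv)$ and $(x(uzy)v)$ both reduce to $\beta(v)\beta(u)\inv\beta(z)\beta(y)\inv(x-y)+\beta(v)\beta(u)\inv(u-z)+v$. Neither this formula nor any substitute verification (e.g.\ an actual proof of para-associativity of $\Gamma$ with representatives $\xi,\eta,\zeta$, which you also only announce) appears in your proposal, so the argument does not go through as written. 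The peripheral points are fine once the formula is available: (T0) and the torsor-to-group passage are immediate, and non-commutativity for $a\neq b$ falls out of the displayed binary product since $\beta\not\equiv 0$ there.
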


\begin{proof} We give three different proofs. In all cases, the main
point is to prove ``para-associativity'' (T1).

\ssk
(a) The first proof is  rather a ``drawing exercise'':
let us show, just by using Desargues' Theorem, that $((xyz)uv)=(xy(zuv))$.
We construct first the point $((xyz)uv)$.
This is best visualized by choosing $a$ as line at infinity of our drawing plane,
and we may draw the lines $y \lor x$ and $z \lor (xyz)$ as vertical lines.
Then $((xyz)uv)$ is the point $q$ in the following illustration:

\begin{center}
\newrgbcolor{dcrutc}{0.86 0.08 0.24}
\newrgbcolor{zzttqq}{0.6 0.2 0}
\psset{xunit=1.0cm,yunit=1.0cm,algebraic=true,dotstyle=o,dotsize=3pt 0,linewidth=0.8pt,arrowsize=3pt 2,arrowinset=0.25}
\begin{pspicture*}(4,-3)(20,5)
\pspolygon[linecolor=zzttqq,fillcolor=zzttqq,fillstyle=solid,opacity=0.1](6.42,2.1)(6.44,0.8)(12.24,-2.14)
\pspolygon[linecolor=zzttqq,fillcolor=zzttqq,fillstyle=solid,opacity=0.1](9.75,1.66)(9.74,2.63)(14.06,-0.52)
\psplot{4}{20}{(--95.87--0.34*x)/24.7}
\psplot[linecolor=dcrutc]{4}{20}{(--10.24-2.22*x)/0.04}
\psplot{4}{20}{(--13.24-2.28*x)/-1.8}
\psplot{4}{20}{(-11.78--3.26*x)/4.37}
\psplot[linecolor=dcrutc]{4}{20}{(--14.33-2.22*x)/0.04}
\psplot[linecolor=green]{4}{20}{(-23.72--1.62*x)/1.82}
\psplot{4}{20}{(--23.57-2.94*x)/5.8}
\psplot{4}{20}{(--39.4-4.24*x)/5.82}
\psplot{4}{20}{(--56.55-4.24*x)/5.82}
\psplot{4}{20}{(--38.32-2.94*x)/5.8}
\psplot[linecolor=green]{4}{20}{(--13.87--2.05*x)/12.89}
\psplot[linecolor=green]{4}{20}{(-11.26--3.35*x)/12.86}
\psplot[linecolor=dcrutc]{4}{20}{(--1243.15-127.05*x)/2.29}
\psplot{4}{20}{(--22.15-3.14*x)/-5.11}
\psplot{4}{20}{(-8.6--3.33*x)/9.07}
\psline[linecolor=zzttqq](6.42,2.1)(6.44,0.8)
\psline[linecolor=zzttqq](6.44,0.8)(12.24,-2.14)
\psline[linecolor=zzttqq](12.24,-2.14)(6.42,2.1)
\psline[linecolor=zzttqq](9.75,1.66)(9.74,2.63)
\psline[linecolor=zzttqq](9.74,2.63)(14.06,-0.52)
\psline[linecolor=zzttqq](14.06,-0.52)(9.75,1.66)
\begin{scriptsize}
\psdots[dotstyle=*,linecolor=blue](-4.46,3.82)
\rput[bl](-4.24,4){\blue{$A$}}
\psdots[dotstyle=*,linecolor=blue](20.24,4.16)
\rput[bl](20,4.28){\blue{$B$}}
\psdots[dotstyle=*,linecolor=blue](4.6,0.74)
\rput[bl](4.68,0.86){\blue{x}}
\psdots[dotstyle=*,linecolor=dcrutc](4.64,-1.48)
\rput[bl](4.72,-1.36){\dcrutc{y}}
\psdots[dotstyle=*,linecolor=green](6.44,0.8)
\rput[bl](6.52,0.92){\green{z}}
\psdots[dotstyle=*,linecolor=darkgray](8.97,4)
\psdots[dotstyle=*,linecolor=darkgray](6.42,2.1)
\rput[bl](6.5,2.22){\darkgray{(xyz)}}
\psdots[dotstyle=*,linecolor=blue](12.24,-2.14)
\rput[bl](12.32,-2.02){\blue{u}}
\psdots[dotstyle=*,linecolor=blue](14.06,-0.52)
\rput[bl](14.14,-0.4){\blue{v}}
\psdots[dotstyle=*,linecolor=darkgray](19.3,4.15)
\psdots[dotstyle=*,linecolor=darkgray](9.74,2.63)
\rput[bl](9.82,2.74){\darkgray{q}}
\psdots[dotstyle=*,linecolor=darkgray](9.75,1.66)
\rput[bl](9.84,1.78){\darkgray{(zuv)}}
\psdots[dotstyle=*,linecolor=darkgray](13.67,4.07)
\psdots[dotstyle=*,linecolor=darkgray](9.74,2.63)
\end{scriptsize}
\end{pspicture*}
\end{center}

%[drawing 4: The point $(zuv)$ is also given]
\nin
Now, the triangles $u,z,(xyz)$ and $v,(zuv),q$ are in a Desargues
configuration, and we conclude that the line $q \lor (zuv)$ is parallel to
$z \lor (xyz)$, i.e., it is vertical. But then the triangles $y,z,(zuv)$
and $x,(xyz),q$ are also in Desargues configuration, \emph{i.e.}, the
intersection points of corresponding sides lie on a  common line, which
must be $b$. It follows that $(x \lor q)\land b = (y \lor (zuv))\land b$,
from which the desired equality follows.

% [see drawing 4bis: the last construction is in green]
\begin{comment}%%%%
As mentioned for our earlier drawings, this drawing admits a spacial interpretation:
imagine three parallel planes $P',P'',P'''$ in affine space $\bR^3$, all having
$b$ as horizon, the ``lowest'' plane $P'$  contains the line $u\lor v$,
the ``middle'' plane $P''$ contains the triangle $y,z,(zuw)$, and the ``top''
plane contains $x,z,(xyz)$.
%[on the scan (drawing 4bis) the point $z$ appears to be on the line $x \lor q$, but that is not the case!]
Then the whole argument is a consequence of  purely incidence geometric
arguments, just as is the case for Desargues Theorem.
\end{comment}%%%

As a next challenge, one may try to establish the remaining equality defining
para-associativity  in the same vein.

\ssk
(b) A computational proof.  Let $\bK$ be the (skew)field of $\cX$, and work
in the affine space $V:= V_a$. If $a=b$, then (as mentioned above),
$(xyz)_{aa} = x - y +z$ is the torsor law of the affine space $V_a$, and
the claim is obviously true.
If $a \not= b$, fix some arbitrary origin $o$ in the affine hyperplane
$V_a \cap b$. There is a linear form $\beta:  V \to \bK$ such that
$b \cap V =\ker(\beta)$, so that
$U_{ab} = \{ x \in V_a \mid \beta(x) \in \bK^\times \}$.

\begin{lemma}\label{ComputationLemma}
For all $x,y,z \in U_{ab}$, in the vector space $(V_a,o)$, we have
\[
(xyz)_{ab} = \beta(z)\beta(y)\inv (x-y) + z .
\]
\end{lemma}

\begin{proof}
Assume first that $x,y,z$ are not collinear. The parallel of $x\lor y$
through $z$ in $V_a$ is
\[
 \bigl( (x \lor y) \land a \bigr) \lor z = \{ z + s (x-y) \mid \, s \in \bK \} .
\]
We determine the point $(y\lor z)\land b$. If $y\lor z$ is parallel to $b$,
then we get easily from the definition that $(xyz)_{ab} = x-y+z$ is the usual
sum, which is in keeping with our claim. Assume that $y \lor z$ is not parallel
to $b$. Then the intersection point $(y\lor z) \land $
is obtaining by solving
$\beta\bigl( (1-t)y + t z \bigr) =0$, whence
$t = \beta(y) (\beta(y) - \beta(z))\inv$, whence
$1-t = - \beta(z) (\beta(y) - \beta(z))\inv$ and
\[
(y\lor z) \land b =
- \beta(z) (\beta(y) - \beta(z))\inv y + \beta(y) (\beta(y) - \beta(z))\inv \ .
\]
The intersection of
$\bigl( ( z \lor y) \land b \bigr) \lor x$ and $\bigl( (x \lor y) \land a \bigr) \lor z = z + \bK (x-y)$
is determined by  $r,s \in \bK$ such that
\[
(1-r) x + r \bigl( - \beta(z) (\beta(y) - \beta(z))\inv y + \beta(y) (\beta(y) - \beta(z))\inv  z \bigr)
= s x - sy + z .
\]
Since both sides are barycentric combinations of $x,y,z$, we may consider $y$
as new origin. Then, if $x$ and $z$ are linearly independent with respect to
this origin, this condition is equivalent to
\[
1-r = s, \qquad  r ( \beta(y) \bigl(\beta(y) - \beta(z) \bigr)\inv ) = 1 \,
\]
whence $r = \bigl(\beta(y) - \beta(z) \bigr) \beta(y)\inv$ and
$s=1-r= \beta(z) \beta(y)\inv$, and finally
\[
(xyz)_{ab} = s (x-y) + z = \beta(z) \beta(y)\inv \bigl( x - y \bigr) + z \,,
\]
proving our claim in the non-collinear case.

Now consider the collinear case.
As pointed out after Definition \ref{CollinearDefinition}, in this case
the definition of $(xyz)_{ab}$ amounts to the geometric definition of
the field operations. If the line $L$ spanned by $x,y,z$ is parallel
to $b$, then $\beta(z)=\beta(y)$, and the formula from the lemma gives
 the additive torsor law $x-y+x$, as required.
Else, choose $o:= L \land b$ as origin, let $u \in L$ with $\beta (u)=1$ and
write $x = \xi u$, $y=\eta u$, $z=\zeta u$ with
$\xi,\eta,\zeta \in \bK^\times$,
and then the formula from the lemma gives
$(xyz)_{ab} = \zeta \eta\inv( \xi u - \eta u) + \zeta u = \zeta \eta\inv \xi u$,
which again corresponds to the definition given in this case.
Thus the claim holds in all cases.
\end{proof}

Using the lemma, we now prove the torsor laws:
first of all, we have
\begin{equation}\label{TorsorFormula2}
\beta \bigl( (xyz) \bigr) =
\beta \Bigl(  \beta(z) \beta(y)\inv \bigl( x - y \bigr) + z   \Bigr) =
% \beta(z) \beta(y)\inv \bigl( \beta(x) - \beta(y) \bigr) + \beta(z) =
\beta(z) \beta(y)\inv  \beta(x) \ ,
\end{equation}
showing that $U_{ab} = V_a \setminus \ker(\beta)$ is stable under the
ternary law. The idempotent laws follow by an easy computation from the
lemma. For para-associativity, using (\ref{TorsorFormula2}), a straightforward
computation shows that both $((xyz)uv)$ and $(x(uzy)v)$ are given by
\[
\beta(v)\beta(u)\inv \beta(z)\beta(y)\inv (x-y) +
\beta(v) \beta(u)\inv (u-z) + v .
\]

\ssk
(b') Remark: there is a slightly different version of (b), having the advantage
that the cases $a=b$ and $a \not= b$ can be treated simultaneously, and the
drawback that the dependence on $y$ is not visible:  choose $o:=y$ as origin in $V=V_a$,
and a linear form $\beta:V \to \bK$ such that
$b \cap V =\{ x \in V \mid \beta(x)=1 \}$.
The case $a=b$ then corresponds to $\beta = 0$.
A computation similar as above yields
\begin{equation}\label{TorsorFormula3}
xz = (xyz)_{ab}  = (1-\beta(z)) x + z = x - \beta(z) x + z
\end{equation}
from which associativity of the product $xz$ follows easily.
Note that Formula (\ref{TorsorFormula3}) is a special case of the formulae
given in Section 1.4 of \cite{BeKi10}.

\ssk
(c)
A third proof  follows from Theorem 2.3  in \cite{BeKi10}, combined with the
following general result:
\end{proof}

\begin{theorem}\label{LinkGammaLattice}
Let $\bK$ be a unital ring and $\cX=\cX(\Omega)$ be the full Grassmannian geometry
of some $\bK$-module $\Omega$ (set of all submodules of $\Omega$),
and define, for a $5$-tuple $(x,a,y,b,z) \in \cX^5$, the submodule
$\Gamma(x,a,y,b,z)$ by Equation  (\ref{GammaDef}).
\begin{enumerate}
\item
Assume that the triple $(x,y,z)$ is in \emph{general position}, that is,
\[
x \land (y \lor z)= 0, \quad \mbox{or } \quad
y \land (x \lor z)= 0, \quad \mbox{or } \quad
z \land (x \lor y)=0 .
\]
Then we have the following equality of submodules of $\Omega$:
\[
\Gamma(x,a,y,b,z) = \Bigl( \bigl( (x \lor y) \land a \bigr) \lor z \Bigr)
\land
\Bigl( \bigl( ( z \lor y) \land b \bigr) \lor x \Bigr) \ .
\]
\item
Assume that $z$ is contained in $x \lor y$, i.e., $z \land (x \lor y)= z$.
Then, for any choice of $u \in U_{ab}$ satisfying $u \land (x \lor y)=0$,
we have
\begin{eqnarray*}
\Gamma(x,a,y,b,z) &=&
([([ \Bigl( \bigl( (x \lor y) \land a \bigr) \lor u \Bigr)
\land
\Bigl( \bigl( ( u \lor y) \land b \bigr) \lor x \Bigr) ]
\lor u ) \land a ] \lor z ) \ \
\cr
&  & \land \quad    [(( z \lor u) \land b) \lor
 \Bigl( \bigl( (x \lor y) \land a \bigr) \lor u \Bigr)
\land
\Bigl( \bigl( ( u \lor y) \land b \bigr) \lor x \Bigr) ]
\end{eqnarray*}
\item
Let $a,b$ be hyperplanes in a vector space and $x,y,z$ lines. Retain
assumptions from the preceding item and assume that $x \not= y$. Then
the expression given there simplifies to
\begin{eqnarray*}
\Gamma(x,a,y,b,z) &=&
(x \lor y)  \land  [(( z \lor u) \land b) \lor
 \Bigl( \bigl( (x \lor y) \land a \bigr) \lor u \Bigr)
\land
\Bigl( \bigl( ( u \lor y) \land b \bigr) \lor x \Bigr) ] \ .
\end{eqnarray*}
\end{enumerate}
\end{theorem}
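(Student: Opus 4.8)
The plan is to prove Item (1) by a direct element chase that is forced to close up by the general-position hypothesis, and then to derive Items (2) and (3) from it. First I would rewrite the defining conditions \eqref{GammaDef} in reduced form: eliminating $\xi$ and $\zeta$ via the forced relations $\xi = \alpha + \eta$ and $\zeta = \eta + \beta$ shows that $\omega \in \Gamma(x,a,y,b,z)$ if and only if there exist $\xi \in x$, $\eta \in y$, $\zeta \in z$ with $\omega = \xi + \zeta - \eta$, $\xi - \eta \in a$, and $\zeta - \eta \in b$. In this form the ``easy'' inclusion needs no hypothesis: since $\xi - \eta \in (x \lor y)\land a$ one gets $\omega = (\xi-\eta) + \zeta \in (((x \lor y)\land a)\lor z)$, and since $\zeta - \eta \in (z \lor y)\land b$ one gets $\omega = (\zeta-\eta) + \xi \in (((z\lor y)\land b)\lor x)$; hence $\Gamma(x,a,y,b,z)$ is contained in the right-hand side.

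For the reverse inclusion I would take $\omega$ in the right-hand side and split it according to each factor: $\omega = \alpha + \zeta_1$ with $\alpha \in (x\lor y)\land a$, $\zeta_1 \in z$, and $\omega = \beta + \xi_2$ with $\beta \in (z\lor y)\land b$, $\xi_2 \in x$. Writing $\alpha = \xi_1 - \eta_1$ and $\beta = \zeta_2 - \eta_2$ yields two candidate witnesses, the first satisfying the $a$-condition and the second the $b$-condition. Subtracting the two expressions for $\omega$ produces the key relation $\hat\xi - \hat\eta + \hat\zeta = 0$ with $\hat\xi \in x$, $\hat\eta \in y$, $\hat\zeta \in z$, and the hypothesis kills exactly one residual: $x\land(y\lor z)=0$ forces $\hat\xi = 0$, $z\land(x\lor y)=0$ forces $\hat\zeta = 0$, and $y\land(x\lor z)=0$ forces $\hat\eta = 0$. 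The decisive point is that when $\hat\xi$ (resp.\ $\hat\zeta$) vanishes the surviving residual lies in $y\cap z$ (resp.\ $x\cap y$), so it cancels in the difference $\zeta - \eta$ (resp.\ $\xi - \eta$) and the first (resp.\ second) candidate already meets \emph{both} conditions, placing $\omega$ in $\Gamma$. I expect this merging step to be the main obstacle: in the middle case the residual lives in $x\cap z$ and must instead be absorbed into $a$ or $b$, which is automatic for distinct lines (the setting of Item (3)) but needs care in general.

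For Item (2) the plan is to reduce the collinear configuration $z \subseteq x\lor y$ to the generic one through the auxiliary point $u$. Because $u\land(x\lor y)=0$, the triple $(x,y,u)$ is in general position, so Item (1) identifies the inner subexpression $W := (((x\lor y)\land a)\lor u)\land(((u\lor y)\land b)\lor x)$ with $\Gamma(x,a,y,b,u)$; and the full right-hand side of Item (2) is literally the lattice expression of Item (1) for the triple $(W,u,z)$, hence equals $\Gamma(W,a,u,b,z)$ once its general position is checked. It then remains to establish the nested identity $\Gamma(\Gamma(x,a,y,b,u),a,u,b,z) = \Gamma(x,a,y,b,z)$, which is the restricted para-associativity reflected in $((xyu)uz) = (xy(uuz)) = (xyz)$: unwinding both sides in the reduced form above and using idempotency (the repeated $u$ collapses, $(uuz)=z$, via \emph{(T0)}, together with the relevant law of $\Gamma$ from \cite{BeKi10}) gives the equality and, since the result is $u$-free, the promised independence of $u$. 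The obstacle here is bookkeeping: verifying the general-position hypothesis needed for the second application of Item (1).

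Finally, for Item (3) I would specialize to lines $x,y,z$ and hyperplanes $a,b$ with $x\neq y$, and show that the first meet-factor of Item (2) collapses to $L := x\lor y$. From the reduced form, $\omega = \xi + \zeta - \eta \in x + z + y \subseteq L$ because $z\subseteq L$, so $\Gamma(x,a,y,b,z)$ is a sub-line of $L$; it is also contained in $F_2 := ((z\lor u)\land b)\lor W$, whence $\Gamma(x,a,y,b,z) \subseteq L\land F_2$. Working inside the three-space $x\lor y\lor u$, a length count gives $\dim(L\land F_2)=1$: here $x\neq y$ makes $\dim L = 2$, and since $u\notin x\lor y$ the point $W=\Gamma(x,a,y,b,u)$ does not lie on $L$, so $L$ and $F_2$ together span the three-space. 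As $\Gamma(x,a,y,b,z)$ is itself a line, the inclusion $\Gamma(x,a,y,b,z)\subseteq L\land F_2$ is an equality, which is exactly the asserted simplified expression. The only real risk in this last step is confirming $W\notin L$ and that the relevant meets and joins have the expected dimensions in the modular lattice.
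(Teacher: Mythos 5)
Your proposal follows the same overall route as the paper: for (1), an element chase in which the unconditional inclusion $\subset$ is read off directly from the defining system and the general-position hypothesis is used to merge the two witnesses supplied by the two meet-factors; for (2), the para-associativity and idempotency of $\Gamma$ from \cite{BeKi10} combined with a double application of (1) to the triples $(x,y,u)$ and $((xyu),u,z)$; and (3) as a simplification of (2). Your explicit ``reduced form'' $\omega=\xi+\zeta-\eta$ with $\xi-\eta\in a$, $\zeta-\eta\in b$ is a clean substitute for the paper's appeal to \cite{Be12}, Lemma 2.3, and your case analysis of which residual $\hat\xi,\hat\eta,\hat\zeta$ is annihilated is exactly the paper's ``choice of three of the four equations of (S)''.

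Two points deserve comment. First, the obstacle you flag in the middle case is genuine and you are right not to paper over it: when $y\land(x\lor z)=0$ the surviving residual lies in $x\land z$ and there is no reason it should be absorbable into $a$ or $b$; the paper's remark that ``the same argument applies'' is too quick here (already for $x=z$ a line in $\bK^3$ one can arrange that $\Gamma$ is a point while the lattice expression is a whole line). Since every invocation of part (1) in this paper concerns triples with $x\land z=0$, your restriction costs nothing, but it is a real caveat on the statement for general submodules. Second, in item (3) you genuinely diverge from the paper: the paper finishes by collapsing the \emph{first} meet-factor of (2), using $((xyu)\lor u)\land a=(x\lor y)\land a$ and then $((x\lor y)\land a)\lor z=x\lor y$ because $z\subseteq x\lor y$; your alternative --- $\Gamma\subseteq L\land F_2$ plus a dimension count --- leans on the unproved assertion that $\Gamma(x,a,y,b,z)$ is one-dimensional, which in this lattice-theoretic setting is not free (it is essentially what the theorem is establishing, and proving it directly would require the computation of Lemma \ref{ComputationLemma}). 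The paper's direct simplification of the first factor avoids that circularity and is the cleaner way to close.
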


\begin{proof} (1)
We prove first the inclusion ``$\subset$'' (which holds in fact for all
triples $(x,y,z)$): on the one hand,
$ \Bigl( \bigl( (x \lor y) \land a \bigr) \lor z \Bigr) \land
\Bigl( \bigl( ( z \lor y) \land b \bigr) \lor x \Bigr)$
is the set of all $\omega \in \Omega$ such that we can write
\[
\omega = \alpha + \zeta, \qquad \omega = \beta + \xi
\]
with $\alpha \in a$, $\beta \in b$, which in turn can be written
\[
\alpha = \xi' +\eta\,, \qquad \beta = \zeta' + \eta'
\]
with $\xi' \in x$, etc.
This gives us a system (S) of 4 equations.

On the other hand, by definition, $\Gamma(x,a,y,b,z)$ is the set of all
$\omega \in \Omega$ such that
\[
\exists \xi \in x,
\exists \alpha \in a,
\exists \eta \in y,
\exists \beta \in b,
\exists \zeta \in z : \quad
\omega = \zeta + \alpha
= \alpha + \eta + \beta
= \xi + \beta
\]
There are several equivalent versions of this system (R) of three
equations -- see \cite{Be12}, Lemma 2.3., from which it is read off that
the four conditions from (S) are satisfied for $\omega \in \Gamma(x,a,y,b,z)$
if we choose $\xi'=\xi$, $\eta'=\eta$, $\zeta'=\zeta$.
Thus the inclusion ``$\subset$'' holds always.

The other inclusion does not always hold, but the theorem gives a sufficient
condition: indeed, if $\omega$ belongs to the set on the right hand side,
then (S) implies
\[
\omega = \xi' + \eta + \zeta = \zeta' + \eta' + \xi\,,
\]
whence $\xi - \xi' \in y \lor z$. If $x \land (y \lor z)=0$, this implies
that $\xi = \xi'$, and three of the four equations from (S) are equivalent
to (R). If $y \land (x \lor z)= 0$ or $z \land (x \lor y)=0$, then the same
argument applies (with respect to another choice of three from the four
equations of (S)). In all cases, it follows that
$\omega \in \Gamma(x,a,y,b,z)$.

(2) From \cite{BeKi10}, Theorem 2.3, we know that $\Gamma$ is para-associative
and satisfies the idempotent law:
\[
\Gamma\bigl( \Gamma(x,a,y,b,u),a,u,b,z \bigl) = \Gamma(x,a,y,b,\Gamma(u,a,u,b,z)) =
\Gamma(x,a,y,b,z)\,.
\]
By assumption, the triple $(x,y,u)$ is in  general position, and from this
it follows that the triple $((xyu),u,z)$ is also in general position;
therefore the left-hand side may be expressed in terms of the lattice structure
by applying twice part (1), which leads to the expression from the claim: in a
first step, we get
\[
 (xyu) =  \Bigl( \bigl( (x \lor y) \land a \bigr) \lor u \Bigr)
\land
\Bigl( \bigl( ( u \lor y) \land b \bigr) \lor x \Bigr) \,,
\]
and in a second step
\[
((xyu)uz) =
(([( \Bigl( \bigl( (x \lor y) \land a \bigr) \lor u \Bigr)
\land
\Bigl( \bigl( ( u \lor y) \land b \bigr) \lor x \Bigr) ]
\lor u ) \land a ] \lor z )
\]
\[
\land \ \
[(( z \lor u) \land b) \lor
 \Bigl( \bigl( (x \lor y) \land a \bigr) \lor u \Bigr)
\land
\Bigl( \bigl( ( u \lor y) \land b \bigr) \lor x \Bigr) ] \,.
\]
(3)
Under the given assumptions, the first term on the right hand side in (2)
reduces to the line $x \lor y$, and hence the claim follows directly from (2).
\end{proof}

\begin{comment}%%%%%%%%%%%%%%%%%%%
(1) Compare with theorem 2.4 from \cite{BeKi10}:
the proofs look similar.  For a system, like (S) above, to define a subset
that can be expressed in terms of the lattice structure, we need a sort of
``open string of equations'': start with a variable ($\omega$), then in the
subsequent equation use two new variables that have not been used before,
etc. Here we have two such open strings
(this gives the intersection).
On the other hand, the fundamental equations (R) can in general not be put into
the form of an ``open string'': they are ``closed'' in the sense that the last
equation always contains two variables that have been used before (they are
coupled, and the system cannot be diagonalized or triangulized over the integers).
It is all the more remarkable that under quite ``generic'' conditions the system
is equivalent to such ``open strings''.
\end{comment}%%%%%%%%%

\begin{remarks}
(a) Not all possible relative positions of
$(x,y,z)$ are covered by Theorem  \ref{LinkGammaLattice}, that is, the lattice
theoretic formula for $\Gamma(x,a,y,b,z)$ does not hold for all triples of submodules of $\Omega$.
For instance, if $\Omega = \bK^{2n}$ and $x,y,z$ are of dimension $n$,
then they cannot be in general position, and in general no $u$ as in (2) exists.
This case illustrates the special r\^ole of ``generalized projective lines'' (cf.\ \cite{BeKi10}) with
respect to lattice approaches.

\ssk
(b) Both for the definitions given here and in \cite{BeKi10}, it is not strictly necessary
that $x,y,z$ belong to $U_{ab}$: they may belong to $V_a$, or to $V_b$, or (in \cite{BeKi10})
be completely arbitrary. We will not enter here into a discussion of the relation of both
definitions if $x,y$ or $z$ does not belong to $U_{ab}$.

\ssk
(c) Both approaches lead to their own notions of \emph{morphisms}.
In the situation of Part (3) of Theorem \ref{LinkGammaLattice}, both of these notions
must lead to the same result:  this is precisely the
famous  ``second fundamental theorem of projective geometry''.
\end{remarks}

\section{The Moufang case}

\begin{theorem}\label{MoufangTheorem}
Assume $\cX$ is a Moufang projective plane and $(a,b)$ a pair of lines.
Then $U_{ab}$, together with the ternary product $(xyz)_{ab}$ defined in
the first section, is a \emph{ternary Moufang loop}. % (or ``Moufang torsor'').
In particular, if we  fix an element  $y \in U_{ab}$ as origin, then
$U_{ab}$ with product $xz=(xyz)_{ab}$ and origin $y$ becomes a (binary)
Moufang loop.
\end{theorem}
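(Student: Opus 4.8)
The plan is to reduce the Moufang case to the algebra of the coordinatizing ring, using that a Moufang projective plane is exactly a projective plane over an \emph{alternative division ring} $\bO$ (Bruck--Kleinfeld), and that by Artin's theorem any two elements of $\bO$ generate an \emph{associative} division subring. I would first dispose of the structural part. Granting that the constructions of Definitions~\ref{GenericDefinition} and~\ref{CollinearDefinition} are well defined, the idempotent laws (T0) and the equivalence of the four relations $w=(xyz)$, $y=(zwx)$, $z=(yxw)$, $x=(wzy)$ are verified by the same elementary incidence arguments as in the Desarguesian case, none of which appeals to Desargues' theorem. Consequently, fixing an origin $y\in U_{ab}$, the product $xz:=(xyz)_{ab}$ has $y$ as a two-sided identity and admits two-sided inverses, so $(U_{ab},\,xz,\,y)$ is a \emph{loop}. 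It then remains (i) to identify this loop as a Moufang loop and (ii) to show that the ternary law is its associated heap operation; the ternary axioms (MT1) and (MT2) will follow from these.

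Second, I would establish the analogue over $\bO$ of the computational formula of Lemma~\ref{ComputationLemma}, or rather of \eqref{TorsorFormula3}: working in the affine plane $V_a\cong\bO^2$ with origin at $y$ and a coordinate form $\beta$ whose $1$-level set is $b$, one expects $xz=(xyz)_{ab}$ to be given by $(1-\beta(z))x+z$, now read in $\bO$. The genuinely new ingredient here, which has no counterpart in the Desarguesian proofs, is the well-definedness of the \emph{collinear} construction: the assertion in Definition~\ref{CollinearDefinition} that $(xyz)_{ab}$ is independent of the auxiliary off-line point $u$ is precisely the classical theorem that the coordinatizing ring of a Moufang plane is a well-defined alternative division ring, independent of the chosen off-line point. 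This is the one place where the Moufang hypothesis is indispensable, and I would import it from the classical coordinatization theory.

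Third, I would check that the loop so coordinatized is Moufang and that the geometric ternary law is $(xyz)=(xy\inv)z$ in it. The Moufang property of the product follows from the alternative laws of $\bO$ together with Artin's theorem. With the binary product written multiplicatively, (MT2) is immediately the left alternative law $x(xz)=(xx)z$, since the middle entry $y$ is the identity, and setting $v=y$ in (MT1) gives the right alternative law $u(xx)=(ux)x$. For the full strength of (MT1) I would use the heap form: with $g:=uv\inv$, (MT1) becomes the right Moufang identity $g\bigl((xy\inv)x\bigr)=\bigl((gx)y\inv\bigr)x$ together with flexibility $(xy\inv)x=x(y\inv x)$. These are exactly the Moufang identities valid in every alternative ring, and they are the ``special case of para-associativity that remains valid precisely in the Moufang case'' alluded to in remark~(A) after Definition~\ref{CollinearDefinition}. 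A parallel, purely geometric route would mirror proof~(a) of Theorem~\ref{TorsorDesarguesTheorem}: because of the repeated entry in $(xyx)$ and the coincidence of the outer and inner pairs, the two Desargues configurations used there degenerate into ``little Desargues'' (center-on-axis) position, which closes up in any Moufang plane even though general Desargues fails.

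The hard part will be controlling non-associativity in the third step, and this is where I expect the real obstacle to lie. Transcribing the coordinate formula naively produces expressions in which three independent elements of $\bO$ occur (for instance $\beta(x),\beta(y),\beta(v)$ in (MT1)), so Artin's two-generator theorem does not by itself close the computation: one must bracket the products so that the surviving combinations cancel by a \emph{genuine} Moufang identity of $\bO$ rather than by the unavailable general associativity. Correspondingly, on the geometric side, one must verify that the repeated entries really do force the configurations into little-Desargues position, and that no stronger instance of para-associativity is being used --- indeed such stronger instances must fail, since otherwise the plane would be Desarguesian. Pinning down exactly which Moufang identities suffice, and no more, is the crux that makes (MT1) and (MT2) the correct output of the construction.
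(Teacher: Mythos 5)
Your proposal is correct and follows essentially the same route as the paper's proof: coordinatize the plane over its alternative division ring $\bK$, establish the explicit affine formula $(xyz)_{ab}=(\beta(z)\beta(y)\inv)\cdot(x-y)+z$ (Lemma \ref{ComputationLemma2}, whose proof is the Desarguesian computation read in $\bK$ using only (LIP)/(RIP)), and reduce (MT0), (MT1), (MT2) to Moufang-type identities in $\bK$, which hold by alternativity. The only cosmetic difference is the order of steps: the paper verifies the ternary identities directly and then obtains the binary Moufang loop from Lemma \ref{MoufangLemma}, whereas you build the binary loop first; the computational core --- including your correct observation that the surviving identities in $\bK$ involve three elements and must be genuine Moufang identities rather than consequences of Artin's two-generator theorem alone --- is the same.
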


\nin Before proving the theorem, we recall the relevant definitions
(cf.,  e.g., \cite{SBGHLS}):

\begin{definition}
A projective plane $\cX$ is a \emph{Moufang plane} if it satisfies
one of the following equivalent conditions
\begin{enumerate}
\item
The group of automorphisms fixing all points of any given line acts
transitively on the points not on the line.
\item
The group of automorphisms acts transitively on quadrangles.
\item
Any two ternary rings of the plane are isomorphic.
\item
Some ternary ring of the plane is an alternative division algebra, \emph{i.e.},
it is a division algebra satisfying the following identities:
\[
x(xy) = (xx)y\,,\qquad  (yx)x = y(xx)\,,\qquad (xy)x = x(yx)\,.
\]
\item
$\cX$ is isomorphic to the projective plane over an alternative division ring.
\item
The ``small Desargues theorem'' holds in all affine parts of $\cX$.
\end{enumerate}
\end{definition}

\nin
The set of invertible elements in alternative algebra forms a \emph{Moufang loop}.
A basic reference for loops in general and Moufang loops in particular is \cite{Bru71}.

\begin{definition}
A \emph{loop} $(Q,\cdot)$ is a set $Q$ with a binary operation
$Q^2 \to Q; (x,y) \mapsto xy$ such that for each $x$, the maps $y\mapsto xy$ and
$y\mapsto yx$ are bijections of $Q$, and having an element $e$ such that $ex=xe=x$
for all $x\in Q$.

A \emph{Moufang loop} is a loop $Q$ that satisfies any, and hence all of the
following equivalent identities (the \emph{Moufang identities}):
\begin{align*}
z(x(zy)) &= ((zx)z)y \tag{M1} \\
x(z(yz)) &= ((xz)y)z \tag{M2} \\
(zx)(yz) &= (z(xy))z \tag{N1} \\
(zx)(yz) &= z((xy)z) \tag{N2}
\end{align*}
\end{definition}

The \emph{left} and \emph{right multiplication maps} (sometimes called
translations) in a loop are defined, respectively by $L_x y := xy =: R_y x$.
The Moufang identities can be written in terms of the left and right
multiplication maps. For instance, the first two identities state that
\[
L_zL_xL_z  = L_{zxz}\myand\qquad R_zR_yR_z  = R_{zyz}\,.
\]
\emph{Moufang's Theorem} implies that Moufang loops are \emph{diassociative},
that is, for any $a,b$, the subloop $\langle a,b\rangle$ generated by $a,b$ is
a group. This can be seen as a loop theoretic analog of Artin's Theorem for
alternative algebras. Two particular instances of diassociativity are the
\emph{left} and \emph{right inverse properties}
\begin{align*}
x\inv (xy) &= y \tag{LIP} \\
(xy)y\inv &= x\,, \tag{RIP}
\end{align*}
where $x\inv$ is the unique element satisfying $xx\inv = x\inv x = e$.
The following lemma gives the Moufang analog of the well-known relation
between torsors and groups:

\begin{lemma}\label{MoufangLemma}
Let $Q$ be a Moufang loop, and define a ternary operation
$(\cdot\cdot\cdot): Q^3 \to Q$ by $(xyz):=(xy\inv)z$. Then the following
three identities hold:
\begin{align*}
(xxy) = &\ y = (yxx) \tag{MT0} \\
(uv(xyx)) &= ((uvx)yx) \tag{MT1} \\ % AP2
(xy(xyz)) &= ((xyx)yz)  \tag{MT2}   % AP3
\end{align*}

Conversely, if $M$ is a set with a ternary operation
$(\cdot\cdot\cdot):M^3 \to M$ satisfying (MT0), (MT1) and (MT2),
then, for every choice of ``origin'' $e \in M$, the binary operation
$x\cdot y:=(xey)$ and the unary operation $x\inv := (exe)$ define the structure
of a Moufang loop on $M$ with neutral element $e$.
\end{lemma}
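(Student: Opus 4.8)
The plan is to treat the two directions separately, the forward one being a short exercise in the Moufang identities and the converse being the substantial part. For the forward direction I would verify (MT0), (MT1), (MT2) directly from $(xyz) = (xy\inv)z$. Identity (MT0) is immediate: $(xxy) = (xx\inv)y = y$, while $(yxx) = (yx\inv)x = y$ by the right inverse property. For (MT1), expanding both sides gives $(uv(xyx)) = (uv\inv)\bigl((xy\inv)x\bigr)$ and $((uvx)yx) = \bigl(((uv\inv)x)y\inv\bigr)x$; writing $a = uv\inv$, $b = x$, $c = y\inv$, these read $a(bcb)$ and $((ab)c)b$, which coincide by the Moufang identity (M2) together with flexibility $b(cb) = (bc)b$. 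For (MT2), the left side is $(xy\inv)\bigl((xy\inv)z\bigr) = a(az)$ with $a = xy\inv$, which equals $a^2 z$ by left alternativity, while the right side is $\bigl(((xy\inv)x)y\inv\bigr)z$; since $x$ and $y$ generate a group by diassociativity (Moufang's Theorem), $((xy\inv)x)y\inv = (xy\inv)^2 = a^2$, so both sides equal $a^2 z$.

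For the converse, fix $e$ and put $x\cdot y := (xey)$, $x\inv := (exe)$. First, $e$ is a two-sided identity, since $(eey) = y$ and $(yee) = y$ by (MT0). Next I would extract the reduction identities that let one pass between the ternary and binary operations. Applying (MT1) with $v = e$ and reading $(exe)$ as a term of the form $(pqp)$ (with $p=e$, $q=y$) gives $x\cdot y\inv = (xe(eye)) = ((xee)ye) = (xye)$; the same manipulation with $q = x$ yields $x\cdot x\inv = (xe(exe)) = ((xee)xe) = (xxe) = e$, the last step by (MT0). The left inverse law $x\inv\cdot x = e$ then follows from the observation that the reversed operation $(xyz)^{\mathrm{op}} := (zyx)$ again satisfies (MT0)--(MT2) --- this is the abstract shadow of the relation $(xyz)_{ba} = (zyx)_{ab}$ --- so it suffices to mirror the previous computation.

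The heart of the converse is the \emph{bridge identity} $(xyz) = (x\cdot y\inv)\cdot z$, equivalently $((xye)ez) = (xyz)$, which in a genuine torsor would be a single application of para-associativity but here must be assembled from the surviving special cases (MT1), (MT2), (MT0) alone. Granting it, (MT1) translates verbatim into $a(bcb) = ((ab)c)b$ for all $a,b,c$ (so in particular into right alternativity), while (MT2) becomes, via left alternativity and cancellation, the relation $(pq)^2 = ((pq)p)q$; together these should deliver flexibility and hence the genuine Moufang identity (M2). Once (M2) holds on $(M,\cdot)$ with the two-sided inverse already in hand, the left and right inverse properties force every translation $L_x$, $R_x$ to be a bijection, so $(M,\cdot)$ is a loop and therefore a Moufang loop with neutral element $e$, reproducing the ternary operation through the bridge identity.

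The main obstacle I anticipate is precisely this converse step: the axioms (MT1) and (MT2) are only the two instances of para-associativity that remain valid in the Moufang case, so one cannot reassociate freely, and the derivation of the bridge identity and of flexibility must interleave the reduction identities with (MT0)-cancellation in exactly the right order. A secondary subtlety is avoiding a circularity between establishing that $(M,\cdot)$ is a genuine loop (unique left and right division) and establishing the Moufang identity; the clean route is to secure the two-sided inverse and a Moufang identity first, and only afterward deduce bijectivity of the translations from the inverse properties.
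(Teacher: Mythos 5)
Your forward direction is correct and coincides with the paper's argument (diassociativity plus (M2) for (MT1), diassociativity plus left alternativity for (MT2)). The converse, however, has two genuine gaps. The first is your claim that the reversed operation $(xyz)^{\mathrm{op}}:=(zyx)$ again satisfies (MT0)--(MT2): this is \emph{false}. Unlike full para-associativity (T1), the pair (MT1)--(MT2) is not invariant under reversal, and the geometric relation $(xyz)_{ba}=(zyx)_{ab}$ does not justify it, since that relation swaps the two hyperplanes rather than expressing an internal symmetry of a single ternary structure. Concretely, take any nonassociative Moufang loop $Q$ with $(xyz)=(xy\inv)z$; then (MT1) for the reversed operation reads $(((xy\inv)x)v\inv)u=(xy\inv)((xv\inv)u)$, and specializing $y=e$ and using the left alternative law this becomes $(x(xc))u=x((xc)u)$ for all $x,c,u$, i.e.\ $(xw)u=x(wu)$ for all $x,w,u$ --- which forces $Q$ to be a group. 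So your route to the left inverse law $x\inv\cdot x=e$ breaks down; the paper never argues by reversal, but reaches two-sided invertibility indirectly through the identity $(x\inv)\inv=x$.

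The second gap is that the bridge identity $(xyz)=(x\cdot y\inv)\cdot z$, which you correctly single out as the heart of the converse, is asserted rather than derived. In the paper its proof \emph{is} the bulk of the argument: one first gets $(x\cdot y\inv)\cdot z\inv=(xyz\inv)$ from (MT1) and the reduction identity $x\cdot y\inv=(xye)$, and then must remove the inverse from the third slot by proving $(x\inv)\inv=x$; that in turn requires a chain of auxiliary identities (a left-inverse-type identity for $(x\inv)\inv$, then $(xy(y\inv)\inv)=x$, each step a careful interleaving of (MT0), (MT1) and the reduction identities). Without $(x\inv)\inv=x$, your ``verbatim translation'' of (MT1) into the right Bol identity $a((bc)b)=((ab)c)b$ only yields it for $c$ in the image of the inversion map, which is not yet known to be all of $M$. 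Once these two points are repaired, the remainder of your plan --- right Bol identity from (MT1), left alternative law from (MT2), then invoking standard loop theory (a right Bol magma with (RIP) is a loop, and a left alternative right Bol loop is Moufang) --- is exactly the paper's route.
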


\begin{proof}
Firstly assume $Q$ is a Moufang loop. The leftmost identity in (MT0) is trivial
while the rightmost follows immediately from (RIP). For (MT1), we compute
\begin{align*}
(uv(xyx)) &= (uv\inv)((xy\inv)x) & \\
&= (uv\inv)(x(y\inv x)) & \text{(}\langle x,y\rangle\text{ is a group)} \\
&= (((uv\inv)x)y\inv)x & (M2) \\
&= ((uvx)yx)\,. \\
\intertext{For (MT2),}
(xy(xyz)) &= (xy\inv)((xy\inv)z) & \\
&= ((xy\inv)(xy\inv))z & \text{(}\langle xy\inv,z\rangle\text{ is a group)} \\
&= (((xy\inv)x)y\inv)z & \text{(}\langle x,y\rangle\text{ is a group)} \\
&= (((xyx)y)z)\,.
\end{align*}

Conversely, suppose $M$ is a set with a ternary operation
$(\cdot\cdot\cdot):M^3 \to M$ satisfying (MT0), (MT1) and (MT2). Fix
$e\in M$ and define $x\cdot y:=(xey)$ and $x\inv := (exe)$ for all
$x,y\in M$. By (MT0), we see that $e$ is neutral element for the binary
operation.

First we establish the following identities:
\begin{align}
x\cdot y\inv &= (xye)\,, \eqnlabel{tmp15} \\
(x\cdot y\inv)\cdot z\inv &= (xyz\inv)\,, \eqnlabel{tmp21} \\
(x\inv)\inv\cdot x &= e\,, \eqnlabel{tmp19} \\
((x\inv)\inv xy\inv ) &= y\,. \eqnlabel{tmp23}
\end{align}
For \eqnref{tmp15} we compute $x\cdot y\inv = (xe(eye)) = ((xee)ye) = (xye)$ using (MT1) in the second
equality and (MT0) in the third. For \eqnref{tmp21}, we have
$(x\cdot y\inv)\cdot z\inv = ((xye)ze) = (xy(eze)) = (xyz\inv)$, using
\eqnref{tmp15} (twice) and (MT1). For \eqnref{tmp19},
$(x\inv)\inv\cdot x = ((x\inv)\inv xe) = ((ex\inv e)xe)
= (ex\inv (exe)) = (ex\inv x\inv) = e$, using \eqnref{tmp15} in the first
equality, (MT1) in the third and (MT0) in the fourth. Finally, for
\eqnref{tmp23}, $((x\inv)\inv xy\inv ) = ((x\inv)\inv\cdot x\inv)\cdot y\inv
= e\cdot y\inv = y\inv$, using \eqnref{tmp21} followed by \eqnref{tmp19}.

Next we prove
\begin{equation}
\eqnlabel{tmpnear}
(xy(y\inv)\inv) = x\,.
\end{equation}
Indeed,
\begin{alignat*}{2}
(xy(y\inv)\inv) &= ((x(y\inv)\inv (y\inv)\inv)y (y\inv)\inv)
&&= (x(y\inv)\inv ((y\inv)\inv y (y\inv)\inv)) \\
&= (x(y\inv)\inv (y\inv)y\inv) &&= x\,,
\end{alignat*}
where we have used (MT0), (MT1), \eqnref{tmp23} and (MT0).

Taking $y = x$ in \eqnref{tmpnear} and applying (MT0), we obtain
\begin{equation}
\eqnlabel{tmpx''}
(x\inv)\inv = x\,.
\end{equation}
From this it follows that $e\inv = e$, since $e\inv = e\cdot e\inv
= (e\inv)\inv \cdot e\inv = e$.

Now in \eqnref{tmp21}, replace $z$ with $z\inv$ and use \eqnref{tmpx''} to obtain
\begin{equation}
\eqnlabel{tmp21a}
(x\cdot y\inv)\cdot z = (xyz)\,.
\end{equation}
Replacing $y$ with $y\inv$ and then setting $z = y\inv$ in \eqnref{tmp21a}, we
obtain $(x\cdot y)\cdot y\inv = (xy\inv y\inv) = x$ using (MT0). Thus the
right inverse property (RIP) holds.

Next we almost obtain the Moufang identity (M2) as follows:
\begin{align*}
((x\cdot y)\cdot z)\cdot y &= (((x\cdot e)\cdot y)\cdot z)\cdot y \\
&= ((xey)z\inv y) \\
&= (xe(yz\inv y)) \\
&= (x\cdot e)\cdot ((y\cdot z)\cdot y)\,,
\end{align*}
using \eqnref{tmp21a}, (MT1) and  \eqnref{tmp21a} again.
In loop theory, this is known as the \emph{right Bol identity}.

We also have the left alternative law:
\begin{align*}
(x\cdot x)\cdot y &= (((x\cdot e)\cdot x)\cdot e)\cdot y \\
&= ((xex)ez) \\
&= (xe(xez)) \\
&= (x\cdot e)\cdot ((x\cdot e)\cdot y) \\
&= x\cdot (x\cdot y)\,,
\end{align*}
using \eqnref{tmp21a}, (MT2) and  \eqnref{tmp21a} again.

The rest of the argument is standard. A magma satisfying the right
Bol identity and (RIP) is a loop, called a \emph{right Bol loop}
(see, \emph{e.g.}, \cite{Kie02}, Theorem 3.11, suitably dualized).
A right Bol loop satisfying the left alternative law is a Moufang
loop \cite{Rob66}.
\end{proof}

\begin{definition}
A set $M$ with a map $(\cdot\cdot\cdot) : M^3 \to M$ satisfying the three
identities from Lemma  \ref{MoufangLemma} will be called a
\emph{ternary Moufang loop}. % \emph{(}or \emph{Moufang torsor}\emph{)}.
\end{definition}

\begin{remarks}
(1) The axioms (MT1) and (MT2) for ternary Moufang loops are precisely the
identities (AP2) and (AP3) in Loos' axiomatization of an alternative pair
\cite{Lo75}.

\ssk
(2) For an associative torsor $(\cdot\cdot\cdot): M^3\to M$, the groups
determined by different choices of ``origin,'' that is, fixed middle slot,
are all isomorphic. The analog of this does not hold for ternary Moufang
loops. Instead the different Moufang loops are \emph{isotopic} \cite{Bru71}.
In fact, it is straightforward to show that for a Moufang loop $Q$, each
isotope of $Q$ is isomorphic to an isotope with multiplication given by
$x\circ z = (xy\inv)z$ for some $y\in Q$. Thus just as alternative
triple systems encode all homotopes of an alternative algebra into a
single structure, so do ternary Moufang loops encode all isotopes of
Moufang loops.

\ssk
(3) Though we did not bother to state this in the lemma, it is clear from
the proof that if we start with a Moufang loop $Q$ with neutral element $e$,
construct the corresponding ternary operation $(\cdot\cdot\cdot)$ and then
construct the binary and unary operations induced by
$(\cdot\cdot\cdot)$ with origin $e$, we recover the original loop
operations. Similarly, if we start with a ternary Moufang loop $M$,
construct  the binary and unary operations with origin $e$
and then construct the corresponding ternary operation induced by the
loop structure, we recover the original ternary Moufang loop.
\end{remarks}

\begin{proof}[Proof of Theorem \ref{MoufangTheorem}.]
In principle, the first two strategies of proof of Theorem \ref{TorsorDesarguesTheorem}  carry over:

\ssk
(a) A proof in the framework of axiomatic geometry.
Instead of the full Desargues theorem we now can only use the Little
Desargues theorem. The drawings will become more complicated than above
since one has to introduce auxiliary points. We will not pursue this proof
here.

\ssk
(b) A computational proof.
Let $\bK$ be the alternative division ring belonging to the plane.
Then the affine space $V:= V_a$ is isomorphic to $\bK^2$,
and affine lines can be described as in the Desarguesian case, eg.
$x \lor y = \{ (1-t)x + ty \mid t \in \bK \}$, where multiplication by ``scalars''
in $\bK^2$ is componentwise.
If $a=b$, then  $(xyz)_{aa} = x - y +z$ is the torsor law of the abelian group
 $V_a \cong (\bK^2,+)$, and the claim is  true.
If $a \not= b$, fix some arbitrary origin $o$ in the affine hyperplane $V_a \cap b$.
There is a linear form $\beta:  V \to \bK$ such that $b \cap V =\ker(\beta)$, so that
$U_{ab} = \{ x \in V_a \mid \beta(x) \in \bK^\times \}$.
(To fix things, one may choose coordinates such that $\beta = \pr_1$
is the projection onto the
first coordinate of $\bK^2$, so $b$ is the vertical axis.)

\begin{lemma}\label{ComputationLemma2}
Let notation be as above. Then, for all $x,y,z \in U_{ab}$, we have
\[
(xyz)_{ab} = ( \beta(z)\beta(y)\inv ) \cdot (x-y) + z .
\]
\end{lemma}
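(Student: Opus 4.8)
The plan is to imitate the computation proving Lemma~\ref{ComputationLemma}, carried out in the affine plane $V_a\cong\bK^2$, the only genuinely new feature being that $\bK$ is now merely alternative. Writing $\lambda:=\beta(z)\beta(y)\inv$, I would first treat the non-collinear case by verifying that the candidate point $w:=\lambda(x-y)+z$ satisfies the two incidence conditions of Definition~\ref{GenericDefinition}. By construction $w$ lies on the parallel of $x\lor y$ through $z$ in $V_a$, namely $\{z+s(x-y)\mid s\in\bK\}$, so nothing is to be checked there; the content is that $w$ also lies on the line $x\lor p$, where $p:=(z\lor y)\land b$ is the point giving the parallel direction in $V_b$.

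First I would locate $p$. Solving the single scalar equation $\beta\bigl((1-\tau)y+\tau z\bigr)=0$ for $\tau$ presents no difficulty even over a non-associative $\bK$: it involves only the two elements $\beta(y),\beta(z)$ and one application of (RIP), giving $\tau=\beta(y)(\beta(y)-\beta(z))\inv$ and $p=y+\tau(z-y)$. Since $(x,y,z)$ is non-collinear, $\{x-y,\,z-y\}$ is a basis of the vector plane based at $y$, and I would test incidence by expanding in this basis: one finds $w-x=(\lambda-1)(x-y)+(z-y)$ and $p-x=-(x-y)+\tau(z-y)$. Collinearity of $x,p,w$ amounts to $w-x=\sigma(p-x)$ for a scalar $\sigma$; comparing $(x-y)$-coefficients forces $\sigma=1-\lambda=(\beta(y)-\beta(z))\beta(y)\inv$, and a short computation inside the subring $\langle\beta(y),\beta(z)\rangle$ --- associative by Artin's theorem --- gives $\sigma\tau=1$, i.e. $\sigma=\tau\inv$. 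The remaining $(z-y)$-coefficient then reads $\sigma\bigl(\tau(z-y)\bigr)=z-y$, which is precisely $\tau\inv(\tau v)=v$ applied componentwise, that is, the left inverse property (LIP). This closes the non-collinear case, and the same computation yields $\beta(w)=\lambda\,\beta(x)\in\bK^\times$, so that $U_{ab}$ is stable under the product.

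I expect the real subtlety to be the bookkeeping of non-associativity, and the point of the arrangement above is to isolate exactly where it matters: every product of \emph{scalars} occurring in the argument lies in the two-generated subring $\langle\beta(y),\beta(z)\rangle$ and is therefore governed by associativity (Artin), while the one unavoidable interaction of such a scalar with an \emph{external} coordinate of $z-y$ is resolved not by associativity --- which genuinely fails --- but by the inverse property, because the two scalars involved are mutually inverse. The heart of the matter is checking that no step secretly requires associating three independent elements of $\bK$.

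The collinear case is the delicate one. Here one cannot simply feed the non-collinear formula into itself: the identification of Definition~\ref{CollinearDefinition}(2) with $((xyu)uz)$ rests, in the associative setting, on Theorem~\ref{LinkGammaLattice} and on para-associativity, neither of which is available over a non-associative $\bK$ (indeed $(\beta(z)\beta(u)\inv)(\beta(u)\beta(y)\inv)=\beta(z)\beta(y)\inv$ already fails in the octonions). Instead I would work directly from the explicit lattice expression of Definition~\ref{CollinearDefinition}, together with the classical coordinatization of a Moufang plane in which the field operations built geometrically on the line $L=x\lor y$ are exactly the operations of $\bK$. Choosing the auxiliary point $u$ off $L$ and coordinates adapted to $L$, so that the relevant coordinates are honest scalars, I would compute the value in each of the two sub-cases recorded after Definition~\ref{CollinearDefinition} --- the product $(\beta(z)\beta(y)\inv)\beta(x)$ when $L\land a\neq L\land b$, and the ternary sum $x-y+z$ when $L$ meets $a\land b$ --- and confirm that both agree with $\lambda(x-y)+z$ evaluated on $L$. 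Independence of the construction from $u$, asserted after Definition~\ref{CollinearDefinition}, is what guarantees this value is well defined; in the Moufang case I would establish it via the plane's coordinatization rather than by the lattice theorem used in the Desarguesian setting.
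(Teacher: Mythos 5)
Your proposal is correct and follows essentially the same route as the paper: the paper's proof simply asserts that the computation of Lemma~\ref{ComputationLemma} carries over verbatim because it uses no associativity beyond what (LIP), (RIP) and two-generated (hence, by Artin, associative) subalgebras provide, and your argument is exactly that verification carried out explicitly, including the appeal to the classical coordinatization for the collinear case. You have merely supplied the details the paper leaves implicit.
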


\begin{proof}
The proof of Lemma \ref{ComputationLemma} carries over without any changes --
associativity of the ring has not been used there, only some elementary properties
of inverses which are direct consequences of the left and right inverse properties
(LIP) and (RIP).
\end{proof}

From the lemma we get, as before,  the formula
\begin{equation}\label{TorsorFormula4}
\beta \bigl( (xyz) \bigr) =
%\beta \Bigl(  \beta(z) \beta(y)\inv \bigl( x - y \bigr) + z   \Bigr) =
% \beta(z) \beta(y)\inv \bigl( \beta(x) - \beta(y) \bigr) + \beta(z) =
[\beta(z) \beta(y)\inv ] \beta(x)
= \bigl( \beta(z) \beta(y) \beta(x )\bigr)
\end{equation}
which means that $\beta$ induces a homomorphism from $U_{ab}$ to the
ternary Moufang loop  $\bK^\times$.

This formula is crucial in the proof of the alternative laws of $U_{ab}$:
essentially, it implies that identities holding in
$\bK^\times$ will carry over to $U_{ab}$; but
 the unit loop of $\bK$ is a ternary Moufang loop, and hence so will be $U_{ab}$.
For instance, for the proof of (MT1), $(uv(xyx)) = ((uvx)yx)$,
write both sides, using the lemma: one sees that equality holds iff,
for the vector $w:=u-v \in \bK^2$ and for all $x,y,v \in \bK^2 \setminus \ker(\beta)$,
we have
\[
((( \beta(x) \beta(y)\inv) \beta(x) ) \beta(v)\inv ) w =
(\beta(x)\beta(y)\inv) ((\beta(x)\beta(v)\inv)w)
\]
But this amounts to an identity in $\bK$ (or, if one prefers,
two identities, one for each component of $w$),  of the same form as the one we want
to prove; this identity holds since $\bK$ is an alternative algebra.
\end{proof}

\section{Prospects}

In subsequent work, we will investigate more thoroughly the geometry corresponding
 to alternative algebras and
alternative pairs (cf.\ \cite{Lo75}): ``alternative geometries'' correspond to  such algebras
in a similar way as the associative geometries from
\cite{BeKi10} correspond to associative algebras and associative pairs.
They play a key r\^ole in the construction of exceptional spaces
corresponding to Jordan algebas and Jordan pairs.
In the following, we briefly mention some topics to be discussed in this context.

\subsection{Structure of the torsors and ternary Moufang loops}
First of all, it is easy to understand the structure of the groups $U_{ab}$ in the Desarguesian case:
for $a=b$,  $U_{ab}=V_a$ is a vector group (this is true even in the Moufang case),
and for $a \not=b$,
 $U_{ab}$ is isomorphic to the \emph{dilation} or \emph{$ax+b$-group}
\begin{equation}\label{DilEqn}
\Dil(E) :=\{ f:E \to E \mid \, f(x)=ax+b, \ b \in E, a \in \bK^\times \}
\end{equation}
of the affine space $E=a_b=a \setminus b$ (where $a \cap b$ is considered as
hyperplane at infinity of $a$).
This dilation group, in turn, is a semidirect product of $\bK^\times$ with the translation group
of $E$. The resulting homomorphism $U_{ab} \to \bK^\times$ can be described in a purely
geometric way (cf.\
\cite{Be12}, Theorem 7.4 for the case of very general Grassmannians).
For Moufang planes, partial analogs of this hold:
 there is a split exact sequence  of ternary Moufang loops
\[
\begin{matrix}
a_b  & \to &U_{ab} & \to & \bK^\times \  ,
\end{matrix}
\]
where any line $L$ in $\cX$ which intersects $a \cup b$ in exactly two different
points provides a splitting.
 But, if the plane is not Desarguesian,
 the set $\Dil(E)$ defined by
(\ref{DilEqn}) is then no longer a group, nor is it contained in the automorphism group
of the plane.
However, it remains true in the Moufang case that one obtains a \emph{symmetric plane}
(defined in \cite{Loe}; in the rough classification of symmetric planes by H.\ L\"owe \cite{Loewe01},
our spaces appear among the \emph{split symmetric planes}.)

\subsection{Duality}
Carrying out our geometric construction from Chapter 1 in the \emph{dual projective space},
by  general duality principles of projective geometry, we get again torsors,
respectively ternary Moufang loops.
Remarkably, the description of the torsors in the Desarguesian case by
equation (\ref{GammaDef}) does not change, except for a switch in $a$ and $b$.
In other words, up to this switch, the map $\Gamma$ is ``self-dual'', which is in keeping
with results on anti-automorphisms from \cite{BeKi10b}. For the moment, it is an open
problem whether a similar ``self-dual description'' exists also in the Moufang case.

\subsection{General projective planes}
Our definition of $(xyz)_{ab}$ in the generic case (Definition \ref{GenericDefinition}) makes
sense for any projective plane (and even for any lattice if we admit $0$ as possible result).
What, then, are its properties?
In particular,
what is its relation with  the ``ternary field'' associated to a quadruple of points in the plane?
Put differently, how do we have to modify the definition in the collinear case
(Definition \ref{CollinearDefinition})?
Does the ``split exact sequence'' $a_b   \to U_{ab}  \to  \bK^\times$
survive in some suitable algebraic category?
Can one  re-interprete the classical Lenz-Barlotti types of projective planes
(cf., e.g., \cite{SBGHLS}, p.\ 142) in terms of $(xyz)_{ab}$?

%One may look at finite geometries... but finite skew-things tend to be fields

\subsection{Perspective drawing}
Our construction also has aspects that should be interesting for applied sciences:
as already pointed out, our two-dimensional drawings have a ``spacial interpretation''.
This can be explained by observing that the torsors $U_{AB}$ living in a
three-dimensional space $\bK \bP^3$ can be mapped homomorphically onto torsors
$U_{ab}$ living in a projective plane $P$ (by choosing $P \subset \bK\bP^3$ intersecting $A \land B$ in
a single point and projecting from a point $q \in A \land B$, $q \notin P$, onto $P$;
then let $a:=P \land A$ and $b:=P \land B$).
A careful look shows that the torsor structure thus represented on $P$ is quite often implicitly used
in two-dimensional ``perspective representations'' of three-dimensional space;
however, to our knowledge, the underlying algebraic structure has so far not yet been
clearly recognized.

\end{document}